\documentclass[10pt]{article}
\usepackage[T1]{fontenc}
\usepackage[utf8]{inputenc}
\usepackage[english]{babel}
\usepackage{authblk}
\usepackage{amsmath,amssymb,mathrsfs,amsthm}
\usepackage{titlesec}
\usepackage[filecolor=green]{hyperref}
\usepackage[toc,page]{appendix}
\usepackage{color}
\usepackage[a4paper]{geometry}
\usepackage{tikz, tikz-cd}
\usepackage{enumitem}
\usepackage{fourier}
\setlist[itemize]{itemsep=.2cm, leftmargin=.75cm}
\setlist[description]{itemsep=.25cm, leftmargin=0cm, font=\normalfont\underline} 
\usepackage{stmaryrd}

\newtheorem{theorem}{Theorem}[section]
\newtheorem{lemma}[theorem]{Lemma}
\newtheorem{proposition}[theorem]{Proposition}

\theoremstyle{definition}
\newtheorem{definition}[theorem]{Definition}

\newcommand\ie{{\em i.e.}}
\newcommand\andd{\text{and}}
\newcommand\andquad{\quad\andd\quad}
\newcommand\andqquad{\qquad\text{and}\qquad}

\newcommand\M{{\bf M}}
\newcommand\NN{\mathbb{N}}
\newcommand\kk{\mathbb{k}}
\newcommand\X{x_1,\cdots,x_d}
\newcommand\monX{[x_1,\cdots, x_d]}
\newcommand\kkX{\kk[[x_1,\cdots, x_d]]}
\newcommand\gen{\vv{s_1},\cdots,\vv{s_q}}
\newcommand\vv[1]{\overrightarrow{#1}}
\newcommand\vve[1]{\vv{e}_{#1}}
\newcommand\Span[1]{\left\langle#1\right\rangle}
\DeclareMathOperator\supp{supp}
\DeclareMathOperator\mon{mon}
\DeclareMathOperator\syz{syz}
\DeclareMathOperator\lcm{lcm}

\newcommand\monOrd{\prec}
\newcommand\monOrdLeq{\preceq}
\newcommand\Scal{{\cal S}}
\newcommand\SchOrd{\monOrd_{\Scal}}
\DeclareMathOperator\lm{lm}
\DeclareMathOperator\lc{lc}

\begin{document}

\title{Schreyer resolution of modules\\[.2cm]
  over formal power series}
\author{Cyrille Chenavier\thanks{Corresponding author:
    \texttt{cyrille.chenavier@unilim.fr}}}
\author{Thomas Cluzeau}
\author{Adya Musson-Leymarie} 
\affil{Univ. Limoges, CNRS, XLIM, UMR 7252, F-87000 Limoges}
\date{}

\maketitle
      
\begin{abstract}
  Standard bases of modules over algebras of formal power series play the same 
  role as Gröbner bases of modules over polynomial algebras. In this article, we 
  first prove the analogue of the diamond lemma for modules over formal power
  series; it characterises standard bases in terms of unique remainders and
  standard representations. Then, using standard representations, we provide a
  method to construct a standard basis of the module of syzygies of a standard
  basis. This construction can be applied inductively to obtain a free
  resolution, similar to the Schreyer resolution, for finitely presented modules
  over formal power series.  
\end{abstract}
\noindent
\begin{small}\textbf{Keywords:} formal power series, standard bases, Schreyer
  resolution.
\end{small}

\tableofcontents

\newpage

\section{Introduction}

In computer algebra, effective computations often require a well-suited notion
of rewriting systems. These are mathematical models that formalise calculations 
and provide methods for determining the representatives of equivalence classes
in a given algebraic context. Starting from a presentation of an algebraic
object given by generators and relations, the approach uses the relations to
simplify the elements of the free object by replacing them with equivalent
elements. If the rewriting system satisfies two fundamental properties called
termination and confluence, meaning that the computations yield deterministic
results, then it can be applied to various problems of algorithmic and abstract
algebra: for example, solving decision problems, computing linear or homotopy
bases, constructing free resolutions, and proving homological
properties~\cite{An86, DK10, GHM19, GM12, Kob90, Squ87}.  
\smallskip

In the context of commutative algebras, the classical tools for constructing
terminating and confluent rewriting systems for algebraic objects given by 
generators and relations are Gröbner bases. They require to work with a monomial
order, so that the simplification rules consist in replacing each leading
monomial appearing in the relations by the corresponding remainder; the
properties of a monomial order ensure termination of this process. Moreover,
proving confluence reduces to a criterion, the so-called diamond
lemma~\cite{Ber78}, based on the notion of $S$-polynomials. Not only do the
$S$-polynomials characterise confluence, but they also serve to construct a
generating set for the syzygies of Gr\"obner bases. From this construction, a
free resolution, named after Schreyer, of the algebra presented by the Gröbner
basis can be obtained by induction. This process requires to extend the notion 
of Gröbner bases, originally introduced for polynomial ideals, to modules over
polynomial algebras~\cite{GG02}.   
\smallskip

Standard bases of formal power series ideals have a syntactic definition which
is analogous to that of Gröbner bases of polynomial ideals. A monomial order is
still used to replace the leading monomial of each generator of the ideal by the
corresponding remainder. Since formal power series are infinite expressions, the
termination is lost in this context. However, using a suitable topological
adaptation of the confluence property, the limit process of replacing leading
monomials using a standard basis still provides a deterministic
result~\cite{Che20, CCM25}. Moreover, using $S$-series, the diamond lemma and
the procedure for constructing a generating set of syzygies of a standard basis
of a formal power series ideal can be adapted~\cite{Bec90a, GH98, Hir64}.   
\smallskip

In this article, we first aim at proving the diamond lemma for standard bases of
modules over algebras of formal power series. We then show how to construct, in
this context, a standard basis for the syzygies of a standard basis. We deduce a
free resolution, analogous to that of Schreyer, of a finitely presented module
over an algebra of formal power series.  

\paragraph{Standard bases of modules over formal power series.}

We mentioned above that the classical approach to rewriting theory over
algebraic structures equipped with vector space operations consists in selecting
one monomial appearing in each relation and simplifying it into the
corresponding remainder. Moreover, the choice of the rewritten monomial is made
using a well-suited notion of monomial order. In this paper, we consider the
free module $\Lambda^p$ of rank $p$ over an algebra of formal power series
$\Lambda$, and monomials are $p$-tuples whose only non-zero entry is a classical 
monomial. A monomial order over the set of monomials
$\mon\left(\Lambda^p\right)$ is a well-order such that the multiplication by any
classical monomial is an non-decreasing map. A standard basis of a sub-module
$L$ of $\Lambda^p$ is a subset $\left\{\gen\right\}$ of $L$ such that the
leading monomial, with respect to the opposite order of the monomial order, of
any element of $L$ is divisible by the leading monomial of some $\vv{s_j}$. To
state a diamond lemma in this context, we still need a notion of standard 
representation of an element of $L$: it is a linear combination of $\vv{s_j}$'s
with coefficients in $\Lambda$ in which there is no cancellation of leading 
monomials. Our first contribution is the following diamond lemma:
\begin{quote}
  {\bf Theorem~\ref{thm:diamond_lemma}.}
  Let $L$ be a sub-module of $\Lambda^p$, $\gen\in L$, and $\monOrd$ be a
  monomial order over~$\mon\left(\Lambda^p\right)$. The following three
  conditions are equivalent:  
  \begin{enumerate}
  \item the set $\left\{\gen\right\}$ is a standard basis of $L$ with respect to 
    $\monOrd$, 
  \item the set $\left\{\gen\right\}$ generates $L$ and for every
    $\vv{f}\in\Lambda^p$, there exists a unique $\vv{r}\in\Lambda^p$ such that 
    \[\vv{f}-\vv{r}\in L\andqquad\supp
    \left(\vv{r}\right)\cap\Span{\lm\left(\gen\right)}=
    \emptyset,\]
  \item every non-zero $\vv{f}\in L$ has a standard representation with respect
    to $\left\{\gen\right\}$ and $\monOrd$. 
  \end{enumerate}
\end{quote}
The notation $\supp\left(\vv{r}\right)$, respectively
$\Span{\lm\left(\gen\right)}$, denotes the support of $\vv{r}$, respectively the
monomials that are divisible by the leading monomial of one $\vv{s_j}$, \ie,
monomials that can be rewritten using the standard basis. Hence, the second item
corresponds precisely to what we mean by deterministic computations: $\vv{r}$ is
the unique representative of $\vv{f}$ modulo $L$ which cannot be rewritten.

\paragraph{Syzygies and resolution of finitely presented modules.}

Our second contribution consists in constructing a free resolution of the module
$\Lambda^p/\Lambda\left(\gen\right)$ with $p$ generators subject to the
relations $\vv{s_j}=\vv{0}$. To do this, we assume that $\left\{\gen\right\}$ is
a standard basis and proceed by induction: we construct a standard basis of the
module of syzygies of $\left\{\gen\right\}$. The induction step is based on the
existence of standard representations. Indeed, given two distinct elements of
the standard basis such that their leading monomials have a common multiple, the
$S$-series of these two elements, \ie, the $\Lambda$-linear combination of these
elements that cancels the least common multiple of the leading monomials,
belongs to the sub-module generated by the standard basis, and therefore has a
standard representation. By collecting the coefficients of this decomposition in 
a tuple of $\Lambda^q$, we obtain a syzygy. Any collection of all the syzygies
constructed in this way is called a Schreyer family. Finally, the original
monomial order over $\mon\left(\Lambda^p\right)$ extends to a monomial order
over $\mon\left(\Lambda^q\right)$, called the induced Schreyer order. Our second
result is stated as follows:   
\begin{quote}
  {\bf Theorem~\ref{thm:std_bases_syz}.} Let $L\subseteq\Lambda^p$ be a
  sub-module, $\monOrd$ be a monomial order over $\mon\left(\Lambda^p\right)$,
  and~$\left\{\gen\right\}~\subset~\Lambda^p$ be a standard basis of $L$ with 
  respect to $\monOrd$. Then, any Schreyer family of $\left\{\gen\right\}$ with
  respect to $\monOrd$ is a standard basis of $\syz^\Lambda\left(\gen\right)$
  with respect to the induced Schreyer order.  
\end{quote}

\section{Conventions and notations}
\label{sec:conv_not}

Throughout the paper, we fix a commutative field $\kk$ and a set $\{\X\}$, where
$d\geq 1$ is an integer. We denote by $\monX$ and by $\Lambda=\kkX$ the
corresponding free commutative monoid and algebra of formal power series, 
respectively. A generic element of $\monX$ is written in the form
$x^\mu=x_1^{\mu_1}\cdots x_d^{\mu_d}$ with multi-exponent
$\mu=\left(\mu_1,\cdots,\mu_d\right)\in\NN^d$. The multiplication in $\monX$ is 
identified to the component-wise addition in $\NN^d$, and $1$ denotes the
neutral element of $\monX$. A {\em monomial order} over $\monX$ is a total order 
$\monOrd$ over $\monX$ such that the following two conditions hold:   
\begin{itemize}
\item for every $x^\mu\in\monX\setminus\{1\}$, we have $1\monOrd x^\mu$, 
\item for every $x^\mu,x^{\mu'},x^\nu\in\monX$, if we have $x^\mu\monOrd
  x^{\mu'}$, then we have $x^{\mu+\nu}\monOrd x^{\mu'+\nu}$. 
\end{itemize}
These two conditions imply that any monomial order makes $\monX$ a {\em
  well-ordered set}, \ie, every non-empty subset of $\monX$ admits a unique
smallest element, see~\cite[Lemma 8.2.3]{BN98}. 
\smallskip

All the modules we will be working with are $\Lambda$-modules; hence, we will
not specify that the coefficient ring is $\Lambda$. For example, "free module"
or "sub-module" means respectively "free $\Lambda$-module" or
"$\Lambda$-sub-module". 
\smallskip

Let $r\geq 1$ be an integer. The free module of rank $r$ is written $\Lambda^r$
and its canonical basis elements are denoted by  $\vve{r,i}$,  for $1\leq i\leq
r$. We introduce the set of monomials of $\Lambda^r$, defined as follows:   
\[\mon\left(\Lambda^r\right)=\left\{x^\mu\vve{r,i}\mid
\mu\in\NN^d,\ 1\leq i\leq r\right\},\]
where $x^\mu\vve{r,i}$ is the element of $\Lambda^r$ whose only non-zero entry
is $x^\mu$, at position $i$, that is
\[x^\mu\vve{r,i}=\left(\underset{i-1}{\underbrace{0,\cdots,0}},
x^\mu,
\underset{r-i}{\underbrace{0,\cdots,0}}\right).\]
A generic element of $\Lambda^r$ is written $\vv{f}$, the coefficient of any
monomial  $x^\mu\vve{r,i}$ in $\vv{f}$ is written $\Span{\vv{f}\mid
  x^\mu\vve{r,i}}$, and we define the {\em support} of $\vv{f}$, written
$\supp\left(\vv{f}\right)$, as follows:  
\[\supp\left(\vv{f}\right)=
\left\{x^\mu\vve{r,i}\mid\Span{\vv{f}\mid x^\mu\vve{r,i}}\neq 0\right\} 
\subseteq\mon\left(\Lambda^r\right).\]
In other words, any element $\vv{f}\in\Lambda^r$ is written as 
\[\vv{f}=
\left(\sum_{\mu\in\NN^d}\Span{\vv{f}\mid x^\mu\vve{r,1}}x^\mu,\cdots,
\sum_{\mu\in\NN^d}\Span{\vv{f}\mid x^\mu\vve{r,r}}x^\mu\right)=
\sum_{x^\mu\vve{r,i}\in\supp\left(\vv{f}\right)}\Span{\vv{f}\mid
  x^\mu\vve{r,i}}x^\mu\vve{r,i}. 
\]
The zero element of $\Lambda^r$ is written $\vv{0}$, it has empty support. Given
a monomial order $\monOrd$ over $\monX$, a {\em monomial order, compatible with
  $\monOrd$}, over $\mon\left(\Lambda^r\right)$ is a total order, still denoted
by $\monOrd$, such that the following two
conditions hold:  
\begin{itemize}
\item for every $\mu,\nu\in\NN^d$ and every $1\leq i\leq r$, if 
  $x^\mu\monOrd x^\nu$, then $x^\mu\vve{r,i}\monOrd x^\nu\vve{r,i}$, 
\item for every $x^\mu\vve{r,i},x^{\mu'}\vve{r,j}\in\mon\left(\Lambda^r\right)$ 
  and every $\nu\in\NN^d$, if $x^\mu\vve{r,i}\monOrd x^{\mu'}\vve{r,j}$, then
  $x^{\mu+\nu}\vve{r,i}\monOrd x^{\mu'+\nu}\vve{r,j}$. 
\end{itemize}
In what follows, all our monomial orders will be compatible with a monomial
order over $\monX$. Hence, we will simply write "monomial order over
$\mon\left(\Lambda^r\right)$" instead of "monomial order over
$\mon\left(\Lambda^r\right)$, compatible with a monomial order over
$\monX$". Since $\monX$ equipped with a monomial order is a well-ordered set,
$\mon\left(\Lambda^r\right)$ equipped with a monomial order is also
well-ordered: the smallest element of a non-empty subset
$S\subseteq\mon\left(\Lambda^r\right)$ is the smallest monomial among all 
monomials of the form $x^{\mu_i}\vve{r,i}$, where $1\leq i\leq r$ is such that
there exists a monomial of the form $x^\mu\vve{r,i}$ that belongs to $S$ and 
then $x^{\mu_i}\vve{r,i}$ is the smallest of these monomials. If $\monOrd$ is a
monomial order over $\mon\left(\Lambda^r\right)$, then for every non-zero
$\vv{f}\in\Lambda^r$, we define the {\em leading monomial} $\lm_\monOrd(f)$ and
{\em leading coefficient} $\lc_\monOrd(f)$ of $\vv{f}$ with respect to $\monOrd$
as follows: 
\[\lm_\monOrd\left(\vv{f}\right)=
\min_\monOrd\left(\supp\left(\vv{f}\right)\right)\in\mon\left(\Lambda^r\right)
\andqquad
\lc_\monOrd\left(\vv{f}\right)=
\Span{\vv{f}\mid\lm_\monOrd\left(\vv{f}\right)}\in\kk.\]
The word "leading" is defined by a $\min$ since it is understood to be the
$\max$ for the opposite order. Given a subset $S\subseteq\Lambda^r$, we let  
\[\lm_\monOrd(S)=\left\{\lm_\monOrd\left(\vv{f}\right)\mid\vv{f}\in
S\setminus\left\{\vv{0}\right\}\right\}.\]
The monoid $\monX$ acts on $\mon\left(\Lambda^r\right)$ by $x^\mu
x^\nu\vve{r,i}=x^{\mu+\nu}\vve{r,i}$, and given any subset 
$S\subseteq\mon\left(\Lambda^r\right)$, we denote by
$\Span{S}\subseteq\mon\left(\Lambda^r\right)$ the orbit of $S$ for this action;
notice that for any sub-module $L\subseteq\Lambda^r$, the set
$\lm_\monOrd\left(L\right)$ is stable for this action. Thanks to this, we are
able to define standard bases for sub-modules of $\Lambda^r$. 

\begin{definition}
  Let $r$ be a strictly positive integer, $L\subseteq\Lambda^r$ be a sub-module
  of $\Lambda^r$, and $\monOrd$ be a monomial order over
  $\mon\left(\Lambda^r\right)$. We say that a subset $S\subseteq L$ is a {\em 
    standard basis} of $L$ with respect to $\monOrd$ if
  $\Span{\lm_\monOrd(S)}=\lm_\monOrd(L)$.  
\end{definition}

Dickson's lemma implies the existence of finite standard bases, a fact we will
now prove. 

\begin{proposition}
  Given a monomial order over $\mon\left(\Lambda^r\right)$, every sub-module of 
  $\Lambda^r$ admits a finite standard basis with respect to this monomial
  order. 
\end{proposition}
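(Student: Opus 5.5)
The idea is to apply Dickson's lemma one coordinate at a time. Let $L\subseteq\Lambda^r$ be a sub-module and $\monOrd$ a monomial order over $\mon\left(\Lambda^r\right)$. If $L=\left\{\vv{0}\right\}$, the empty set is a standard basis, since $\lm_\monOrd(L)=\emptyset=\Span{\emptyset}$. Otherwise, for each $1\leq i\leq r$ we consider
\[E_i=\left\{\mu\in\NN^d\mid x^\mu\vve{r,i}\in\lm_\monOrd(L)\right\}\subseteq\NN^d .\]
We already observed that $\lm_\monOrd(L)$ is stable under the action of $\monX$. Hence each $E_i$ is an upper set (a monoid ideal) of $\NN^d$ for the componentwise order: if $\mu\in E_i$, then $\mu+\nu\in E_i$ for every $\nu\in\NN^d$. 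For completeness we can recheck this stability directly. If $\vv{f}\in L$ has $\lm_\monOrd(\vv{f})=x^\mu\vve{r,i}$, then $x^\nu\vv{f}\in L$ and its support is $x^\nu\supp(\vv{f})$. The second compatibility axiom makes multiplication by $x^\nu$ strictly order preserving on monomials, because it is injective and monotone. So the minimum of $x^\nu\supp(\vv{f})$ is $x^{\mu+\nu}\vve{r,i}$.

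Next we invoke Dickson's lemma: every subset of $\NN^d$ has finitely many minimal elements for the componentwise order, and every element lies above one of them. Let $M_i\subseteq E_i$ be the finite set of minimal elements of $E_i$. For each $\mu\in M_i$ we choose some $\vv{s}_{i,\mu}\in L$ with $\lm_\monOrd(\vv{s}_{i,\mu})=x^\mu\vve{r,i}$; such an element exists by the definition of $E_i$. Set $S=\left\{\vv{s}_{i,\mu}\mid 1\leq i\leq r,\ \mu\in M_i\right\}$, a finite subset of $L$.

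Finally we check that $\Span{\lm_\monOrd(S)}=\lm_\monOrd(L)$.
\begin{itemize}
\item The inclusion $\subseteq$ holds because $\lm_\monOrd(S)\subseteq\lm_\monOrd(L)$ and $\lm_\monOrd(L)$ is stable under the action.
\item For $\supseteq$, take any $x^\nu\vve{r,i}\in\lm_\monOrd(L)$. Then $\nu\in E_i$, so by Dickson's lemma $\nu\geq\mu$ componentwise for some $\mu\in M_i$. Therefore $x^\nu\vve{r,i}=x^{\nu-\mu}\lm_\monOrd(\vv{s}_{i,\mu})\in\Span{\lm_\monOrd(S)}$.
\end{itemize}

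There is no serious obstacle. The only points needing care are the stability of leading monomials under multiplication, which relies on the strictness of the compatibility axiom, and the form of Dickson's lemma used: finiteness of minimal generators of a monoid ideal of $\NN^d$, applied separately to each of the $r$ components.
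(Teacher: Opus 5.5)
Your proof is correct and follows the paper's argument. For each component $\vve{r,i}$, Dickson's lemma applies to the $\monX$-stable set of leading monomials of $L$, and you pick an element of $L$ realising each of the finitely many minimal generators. You also spell out details the paper leaves implicit: the stability of $\lm_\monOrd(L)$ under the action, and the two inclusions.
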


\begin{proof}
  Let $L\subseteq\Lambda^r$ be a sub-module of $\Lambda^r$ and $\monOrd$ be a
  monomial order over $\mon\left(\Lambda^r\right)$. For every $1\leq i\leq r$,
  the monomials of the form $x^\mu\vve{r,i}$ that belong to $\lm_\monOrd(L)$ are
  stable under the action of $\monX$. Thus, according to Dickson's lemma, they
  form the orbit of finitely many such monomials. For each $1\leq i\leq r$ and
  each generator of the corresponding orbit, we choose an element of $L$ whose
  leading monomial is this generator. The set of elements thus chosen forms a 
  finite standard basis of $L$ with respect to $\monOrd$.   
\end{proof}

\section{The diamond lemma for modules over formal power series}
\label{sec:diamond_lemma}

The diamond lemma for formal power series ideals is proven in~\cite[Corollaries
  2.2, 2.3]{Bec90a}. In this section, we extend this result to sub-modules of
finite rank free modules. 
\smallskip

Let us fix two positive integers $p$ and $q$, a subset
$S=\left\{\gen\right\}\subset\Lambda^p$, and a monomial order $\monOrd$ over the
set of monomials $\mon\left(\Lambda^p\right)$. Since $\Lambda^p$ is the only
free module we work with in this section, the elements of its canonical basis
are written $\vve{i}$ instead of $\vve{p,i}$, for $1\leq i\leq p$, and since the
monomial order $\monOrd$ is fixed, we will omit it in the notations that refer
to it. Hence, writing  
\[\lm\left(\vv{s_j}\right)=x^{\mu_j}\vve{i_j},\]
for every $1\leq j\leq q$, $\Span{\lm(S)}\subseteq\mon\left(\Lambda^p\right)$ is
the set of monomials of $\Lambda^p$ that are of the form 
\[x^\mu\lm\left(\vv{s_j}\right)=
x^{\mu+\mu_j}\vve{i_j}=
\left(\underset{i_j-1}{\underbrace{0,\cdots,0}},x^{\mu+\mu_j},
\underset{p-i_j}{\underbrace{0,\cdots0}}\right).\]

The following result is the adaptation of~\cite[Proposition 2.1]{Bec90a} to free
modules of arbitrary rank; in essence, it means that the limit of the process of
cancelling leading monomials using $S$ provides a remainder.  
\begin{proposition}\label{prop:nf}
  For any $\vv{f}\in\Lambda^p$, there exist $c_1,\cdots,c_q\in\Lambda$ such
  that 
  \begin{enumerate}
  \item $\supp\left(\vv{f}-\left(c_1\vv{s_1}+\cdots+c_q\vv{s_q}\right)\right)$
    does not intersect $\Span{\lm(S)}$,
  \item $\lm\left(\vv{f}\right)\monOrdLeq\lm\left(c_j\vv{s_j}\right)$, for every
    $j\in\{1,\cdots,q\}$ such that $c_j\neq 0$. 
  \end{enumerate}
\end{proposition}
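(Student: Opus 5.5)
We will construct the coefficients $c_1,\cdots,c_q$ as limits, in the $(x_1,\cdots,x_d)$-adic topology of $\Lambda$, of a reduction process driven by the well-ordered set $\mon\left(\Lambda^p\right)$. Since termination cannot be expected, the natural device is transfinite induction, or equivalently a \emph{simultaneous} division step. Following Becker's approach for $p=1$, we prefer a one-shot construction: first we decompose $\vv{f}$ directly into monomials, and then we solve a fixed point problem.

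\textbf{Step 1: a partition of the rewritable monomials.} For each monomial $m\in\Span{\lm(S)}$, choose the smallest index $j(m)$ such that $\lm\left(\vv{s_{j(m)}}\right)$ divides $m$, and write $m=x^{\nu(m)}\lm\left(\vv{s_{j(m)}}\right)$. For $\vv{g}\in\Lambda^p$, define the linear operators
\[
Q_j\left(\vv{g}\right)=\sum_{m\in\supp(\vv{g})\cap\Span{\lm(S)},\ j(m)=j}\frac{\Span{\vv{g}\mid m}}{\lc\left(\vv{s_j}\right)}x^{\nu(m)}\in\Lambda,
\]
and $T\left(\vv{g}\right)=\vv{g}-\sum_jQ_j\left(\vv{g}\right)\vv{s_j}$. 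These sums are well defined because each exponent $\nu$ appears only finitely often. By construction, the part of $\vv{g}$ supported on $\Span{\lm(S)}$ is cancelled. Two properties follow. First, $\supp\left(T\vv{g}-R\vv{g}\right)$ consists of monomials strictly greater than monomials of $\vv{g}$ in $\Span{\lm(S)}$, where $R\vv{g}$ denotes the part of $\vv{g}$ outside $\Span{\lm(S)}$. Second, $\lm\left(Q_j(\vv{g})\vv{s_j}\right)\succeq\lm\left(\vv{g}\right)$.

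\textbf{Step 2: iteration and convergence.} Set $\vv{g_0}=\vv{f}$ and $\vv{g_{k+1}}=\vv{g_k}-\sum_jQ_j\left(\vv{g_k}\right)\vv{s_j}$. At each stage, keep the non-rewritable part aside by letting $\vv{r}=\sum_kR\vv{g_k}$, and put $c_j=\sum_kQ_j\left(\vv{g_k}\right)$. Two facts are needed for these sums to converge and for $\vv{f}-\sum c_j\vv{s_j}=\vv{r}$ to hold in the limit. First, the order of $\vv{g_k}-R\vv{g_k}$ tends to infinity. Second, as a consequence, $Q_j\left(\vv{g_k}\right)\to 0$ adically.

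\textbf{Main obstacle.} The main obstacle is exactly this convergence. A monomial order over $\monX$ need not be degree-compatible, so an increase in $\prec$ does not force an increase in total degree. To handle this, I would first reduce to the case where $\monOrd$ refines the degree, or rather use the standard trick for power series: $\monOrd$ is a well-order in which $1$ is minimal. The key point is to show that for every $N$, only finitely many monomials of degree $<N$ lie between, which is false in general. I would therefore use the alternative, Becker's argument via the Weierstrass-type division theorem of Hironaka/Grauert. The operators $Q=(Q_j)$ and $R$ give a splitting $\Lambda^p=\bigoplus_j\Lambda\cdot\lm\left(\vv{s_j}\right)|_{\text{partition}}\oplus\Lambda^p_{\text{free}}$. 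The map $\Phi:(c,\vv{r})\mapsto\sum c_j\vv{s_j}+\vv{r}$ is then a perturbation of the identity-like isomorphism $(c,\vv{r})\mapsto\sum c_j\lc(\vv{s_j})\lm(\vv{s_j})+\vv{r}$. The perturbation $\vv{s_j}-\lc\lm$ only involves strictly larger monomials, so one shows that the perturbation operator $u$ is topologically nilpotent with respect to the filtration by the well-order. The transfinite version of this works because, for each fixed monomial $m$, the coefficient of $m$ in $\vv{g_k}$ stabilises: every new contribution to $m$ comes from a strictly smaller monomial, and the set of smaller monomials is handled by well-founded induction.

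\textbf{Step 3: the inequalities.} Once the limits exist, item 1 holds by construction. Item 2 follows because every term $Q_j\left(\vv{g_k}\right)\vv{s_j}$ has leading monomial $\succeq\lm\left(\vv{g_k}\right)\succeq\lm\left(\vv{f}\right)$, and the minimum is preserved in the limit.
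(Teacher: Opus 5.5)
\textbf{There is a genuine gap: convergence is never proved.} Your overall strategy, iterating a simultaneous division step countably many times and passing to the adic limit, is a legitimate alternative to the paper's proof. But the proposal never proves the one step that needs proof, the convergence, which you yourself call the main obstacle.

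The alternatives you list do not close it. Reducing to a degree-refining order is not available, because the remainder depends on $\monOrd$. Invoking the Hironaka/Grauert division theorem is circular, since for $p=1$ that theorem is essentially the statement being proved.

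Your final sentence points the right way, but as written the well-founded induction does not go through. Suppose every contribution to a monomial $m$ comes from strictly smaller monomials, each of which eventually has coefficient $0$. You still cannot conclude that the coefficient of $m$ eventually vanishes when $m$ has infinitely many smaller monomials, because infinitely many eventually-vanishing coefficients need not vanish simultaneously. An example is $x_1$ for the lexicographic order with $x_2^a\monOrd x_1$ for all $a$.

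\textbf{The missing idea is local finiteness.} Write $u(\vv{h})=T(\vv{h})-R\vv{h}$. The coefficient of $u(\vv{h})$ at $m$ depends only on the coefficients of $\vv{h}$ at the monomials $x^\nu\lm(\vv{s_j})$ with $m=x^\nu t$ for some $t\in\supp(\vv{s_j})\setminus\{\lm(\vv{s_j})\}$. There are finitely many of these, because $m$ has finitely many divisors and $j$ ranges over a finite set, and they are all $\monOrd m$.

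With this, well-founded induction shows that every coefficient of $\vv{h_k}=u^k(\vv{f})$ is eventually $0$. Since $\mon(\Lambda^p)$ has only finitely many monomials of each degree, this is exactly $\vv{h_k}\to\vv{0}$ adically. You should say this explicitly, because it is where the lack of degree-compatibility is actually overcome.

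\textbf{Smaller corrections.} Your bookkeeping is off. With $\vv{g_{k+1}}=T(\vv{g_k})$ the non-rewritable part is carried along, so $\vv{r}=\sum_kR\vv{g_k}$ counts it repeatedly. You can fix this in one of two ways:
\begin{itemize}
\item Strip the non-rewritable part off at each step by iterating $\vv{h_{k+1}}=u(\vv{h_k})$. Then $\vv{h_k}=R\vv{h_k}+\sum_jQ_j(\vv{h_k})\vv{s_j}+\vv{h_{k+1}}$ telescopes to $\vv{f}=\sum_kR\vv{h_k}+\sum_jc_j\vv{s_j}$ with $c_j=\sum_kQ_j(\vv{h_k})$.
\item Keep your iteration and take $\vv{r}=\lim_k\vv{g_k}$.
\end{itemize}
Also, Step 1 does not cancel the whole rewritable part of $\vv{g}$: tails of $x^{\nu(m)}\vv{s_j}$ can land on other rewritable monomials of $\vv{g}$. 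Only your ``First'' property is true, and it is all you need. Step 3 is fine once convergence is established.

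\textbf{Comparison with the paper.} The paper avoids every convergence question by clearing one monomial at a time along the ordinal enumeration of $\mon(\Lambda^p)$. Each $c_j$ is then a sum of terms $\lambda_j(\beta)x^{\mu_j(\beta)}$ with pairwise distinct exponents, hence automatically a power series. One only has to check that a cleared monomial is never disturbed again, which holds because all later corrections have strictly larger leading monomials. Your route, once repaired, gives the extra information that $\omega$ simultaneous steps suffice, at the cost of the finiteness argument above.
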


\begin{proof}
  In the proof, $<$ denotes the order over ordinals. Since
  $\mon\left(\Lambda^p\right)$ equipped with $\monOrd$ is a well-ordered set,
  there exists a unique ordinal $\alpha$ that is isomorphic to this
  well-ordered set, so that we may arrange $\mon\left(\Lambda^p\right)$ into a
  strictly increasing transfinite sequence   
  \[\mon\left(\Lambda^p\right)=
  \left(x^{\mu_\beta}\vve{i_\beta}\right)_{\beta<\alpha}.\]
  We define $c_1,\cdots,c_q$ by constructing their supports and corresponding
  coefficients by transfinite induction. Let $\beta<\alpha$ and assume that for
  every $\gamma<\beta$, we have constructed
  $\lambda_1(\gamma),\cdots,\lambda_q(\gamma)\in\kk$ and
  $\mu_1(\gamma),\cdots,\mu_q(\gamma)\in\NN^d$ in such a way that the following
  three conditions hold:
  \begin{enumerate}
  \item there exists at most one index $j\in\{1,\cdots,q\}$ such that
    $\lambda_j(\gamma)\neq 0$, and if so, we have
    $x^{\mu_\gamma}\vve{i_\gamma}=x^{\mu_j(\gamma)}\lm\left(\vv{s_j}\right)$, 
  \item for every $j\in\{1,\cdots,q\}$ and every $\gamma'<\gamma$, if
    $\lambda_j(\gamma)\neq0$ and $\lambda_j(\gamma')\neq 0$, then
    $x^{\mu_j(\gamma')}\monOrd x^{\mu_j(\gamma)}$,
  \item if we define $\vv{r(\gamma)}$ by
    \[\vv{r(\gamma)}=\vv{f}-
    \sum_{j=1}^q\left(\sum_{\gamma'\leq\gamma}
    \lambda_j(\gamma')x^{\mu_j(\gamma')}\right)\vv{s_j},\]
    then for every $x^\mu\vve{i}\in\supp\left(\vv{r(\gamma)}\right)\cap
    \Span{\lm\left(S\right)}$, we have $x^{\mu_\gamma}\vve{i_\gamma}\monOrd
    x^\mu\vve{i}$.  
  \end{enumerate}
  In order to proceed with the induction step for $\beta$, for every
  $j\in\{1,\cdots,q\}$, we let $\lambda_j(\beta)=0$ and $\mu_j(\beta)\in\NN^d$
  arbitrary, except if $x^{\mu_\beta}\vve{i_\beta}\in\Span{\lm(S)}$, and in this 
  case, we let 
  \[\lambda_j(\beta)=\frac{\Span{\vv{s(\beta)}\mid x^{\mu_\beta}\vve{i_\beta}}}{
    \lc\left(\vv{s_j}\right)}
  \andqquad
  \mu_j(\beta)=\mu,\]
  with one $j\in\{1,\cdots,q\}$ chosen for which there exists
  $\mu\in\NN^d$ so that
  $x^{\mu_\beta}\vve{i_\beta}=x^\mu\lm\left(\vv{s_j}\right)$, and where 
  \[\vv{s(\beta)}=\vv{f}-
  \sum_{j=1}^q\left(\sum_{\gamma<\beta}
  \lambda_j(\gamma)x^{\mu_j(\gamma)}\right)\vv{s_j}.\]
  Let us check that the three induction hypotheses hold for
  $\beta$: 
  \begin{enumerate}
  \item that follows from the definitions of $\lambda_j(\beta)'s$ and
    $\mu_j(\beta)$'s, for $j\in\{1,\cdots,q\}$,
  \item the only case that does not follow immediately from the corresponding
    induction hypothesis is when there exists one $j\in\{1,\cdots,q\}$
    such that $\lambda_j(\beta)\neq 0$ and $\gamma<\beta$ such that
    $\lambda_j(\gamma)\neq 0$; but if that is the case, the inequality
    $x^{\mu_j(\gamma)}\monOrd x^{\mu_j(\beta)}$ is verified because otherwise,
    by definition of a monomial order and according to the first induction
    hypothesis, the inequality $x^{\mu_\gamma}\vve{i_\gamma}\monOrd
    x^{\mu_\beta}\vve{i_\beta}$ would not hold,  
  \item first, consider $\gamma<\beta$ such that
    $x^{\mu_\gamma}\vve{i_\gamma}\in\Span{\lm(S)}$. From the induction
    hypothesis,
    $x^{\mu_\gamma}\vve{i_\gamma}\notin\supp\left(\vv{r(\gamma)}\right)$.
    Moreover, for every $\gamma<\gamma'\leq\beta$ and every
    $j\in\{1,\cdots,q\}$, $\lambda_j(\gamma')x^{\mu_j(\gamma')}\vv{s_j}$ is
    either equal to $\vv{0}$ or has leading monomial
    $x^{\mu_{\gamma'}}\vve{i_{\gamma'}}\succ x^{\mu_\gamma}\vve{i_\gamma}$, so
    that $x^{\mu_\gamma}\vve{i_\gamma}\notin  
    \supp\left(\lambda_j(\gamma')x^{\mu_j(\gamma')}\vv{s_j}\right)$. Hence, we
    get that $x^{\mu_\gamma}\vve{i_\gamma}$ does not belong to the support of 
    \[\vv{r(\beta)}=\vv{r(\gamma)}-
    \sum_{j=1}^q\left(\sum_{\gamma<\gamma'\leq\beta}
    \lambda_j(\gamma')x^{\mu_j(\gamma')}\right)\vv{s_j}.\]
    We have thus proven that every element of
    $\supp\left(\vv{r(\beta)}\right)\cap  
    \Span{\lm(S)}$ is not smaller than $x^{\mu_\beta}\vve{i_\beta}$. To
    conclude, notice that $\lambda_j(\beta)$'s, for $j\in\{1,\cdots,q\}$, are
    chosen so that  $x^{\mu_\beta}\vve{i_\beta}\notin
    \supp\left(\vv{r(\beta)}\right)\cap\Span{\lm(S)}$. 
  \end{enumerate}
  We now define the $c_j$'s based on the second induction hypothesis, namely,
  for every $j\in\{1,\cdots,q\}$, we let 
  \[c_j=\sum_{\beta<\alpha}\lambda_j(\beta)x^{\mu_j(\beta)}.\]
  Then, the first statement of the proposition holds since for every
  $x^{\mu_\gamma}\vve{i_\gamma}\in\Span{\lm(S)}$, the last induction hypothesis
  implies that $x^{\mu_\gamma}\vve{i_\gamma}$ does not belong to
  $\supp\left(\vv{r(\gamma)}\right)$, the first one implies that
  $x^{\mu_\gamma}\vve{i_\gamma}$ does not belong to the support of any element
  of the form $\lambda_j(\beta)x^{\mu_j(\beta)}\vv{s_j}$, for $\beta>\gamma$   
  and $j\in\{1,\cdots,q\}$, and thus $x^{\mu_\gamma}\vve{i_\gamma}$ does not
  belong to the support of  
  \[\vv{f}-\sum_{j=1}^qc_j\vv{s_j}=
  \vv{r(\gamma)}-\sum_{j=1}^q\left(\sum_{\beta>\gamma}
  \lambda_j(\beta)x^{\mu_j(\beta)}\right)\vv{s_j}.\]
  For the second assertion, first notice that from the induction hypotheses, the
  leading monomials of all non-zero $c_j\vv{s_j}$'s, for $1\leq j\leq q$, are 
  pairwise distinct elements of $\Span{\lm(S)}$. Hence, if
  $\lm\left(\vv{f}\right)$ were strictly greater than the smallest 
  monomial among all well-defined $\lm\left(c_j\vv{s_j}\right)$'s, for $1\leq
  j\leq q$, this smallest monomial would be the leading monomial of
  $\vv{f}-\left(c_1\vv{s_1}+\cdots+c_q\vv{s_q}\right)$, contradicting the first
  statement. 
\end{proof}

To state the diamond lemma, we need the following notion of standard
representation.  

\begin{definition}
  A non-zero element $\vv{f}\in\Lambda^p\setminus\left\{\vv{0}\right\}$ is said
  to have a {\em standard representation} with respect to $S$ and $\monOrd$ if 
  there exist $c_1,\cdots,c_q\in\Lambda$ such that the following hold:
  \[\vv{f}=c_1\vv{s_1}+\cdots+c_q\vv{s_q}\andqquad
  \lm\left(\vv{f}\right)=\min\left\{
  \lm\left(c_j\vv{s_j}\right)\mid 1\leq j\leq q, c_j\neq 0\right\}.\]
  In this case, we call $\left(c_1,\cdots,c_q\right)\in\Lambda^q$ a {\em
    standard representation} of $\vv{f}$ with respect to $S$ and $\monOrd$.
\end{definition}

\begin{theorem}\label{thm:diamond_lemma}
  Let $L$ be a sub-module of $\Lambda^p$, $\gen\in L$, and $\monOrd$ be a
  monomial order over $\mon\left(\Lambda^p\right)$. The following three
  conditions are equivalent: 
  \begin{enumerate}
  \item the set $\left\{\gen\right\}$ is a standard basis of $L$ with respect to 
    $\monOrd$, 
  \item the set $\left\{\gen\right\}$ generates $L$ and for every
    $\vv{f}\in\Lambda^p$, there exists a unique $\vv{r}\in\Lambda^p$ such that 
    \[\vv{f}-\vv{r}\in L\andqquad\supp
    \left(\vv{r}\right)\cap\Span{\lm\left(\gen\right)}=
    \emptyset,\]
  \item every non-zero $\vv{f}\in L$ has a standard representation with respect
    to $\left\{\gen\right\}$ and $\monOrd$. 
  \end{enumerate}
\end{theorem}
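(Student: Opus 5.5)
I will prove the cycle of implications $1\Rightarrow 3\Rightarrow 2\Rightarrow 1$, relying throughout on Proposition~\ref{prop:nf}.

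\emph{From 1 to 3.} Let $\vv{f}\in L$ be non-zero and apply Proposition~\ref{prop:nf} to $\vv{f}$. This gives $c_1,\cdots,c_q$ such that $\vv{r}=\vv{f}-\sum_j c_j\vv{s_j}$ has support disjoint from $\Span{\lm(S)}$. Since the $\vv{s_j}$ lie in $L$, so does $\vv{r}$. If $\vv{r}\neq\vv{0}$, then $\lm(\vv{r})\in\lm(L)=\Span{\lm(S)}$ by the first condition, which is impossible. Hence $\vv{f}=\sum_j c_j\vv{s_j}$. The second item of Proposition~\ref{prop:nf} gives $\lm(\vv{f})\monOrdLeq\lm(c_j\vv{s_j})$ for every $j$ with $c_j\neq0$. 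The reverse inequality holds because the leading monomial of a sum is at least the minimum of the leading monomials of its terms. So $\lm(\vv{f})$ is exactly that minimum, and we have a standard representation.

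\emph{From 3 to 2.} Generation is immediate, since every non-zero element of $L$ is a $\Lambda$-combination of the $\vv{s_j}$. For existence of $\vv{r}$, apply Proposition~\ref{prop:nf} to any $\vv{f}$: the difference $\vv{f}-\vv{r}=\sum_jc_j\vv{s_j}$ lies in $L$. For uniqueness, suppose $\vv{r}$ and $\vv{r}'$ both work. Then $\vv{g}=\vv{r}-\vv{r}'\in L$ and $\supp(\vv{g})\cap\Span{\lm(S)}=\emptyset$. If $\vv{g}\neq\vv{0}$, take a standard representation $(c_1,\cdots,c_q)$ of $\vv{g}$. Then $\lm(\vv{g})=\lm(c_j\vv{s_j})=\lm(c_j)\lm(\vv{s_j})\in\Span{\lm(S)}$ for some $j$. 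Here I use that $\lm(c\vv{s})=\lm(c)\lm(\vv{s})$, which follows from the compatibility axioms of the monomial order together with the absence of zero divisors in the coefficient products. This contradicts the disjointness, so $\vv{g}=\vv{0}$.

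\emph{From 2 to 1.} The inclusion $\Span{\lm(S)}\subseteq\lm(L)$ is clear, because $\lm(L)$ is stable under the action of $\monX$ and $S\subseteq L$. For the converse, suppose some $\vv{f}\in L\setminus\{\vv{0}\}$ has $\lm(\vv{f})\notin\Span{\lm(S)}$. Apply Proposition~\ref{prop:nf} to get $\vv{r}=\vv{f}-\sum_jc_j\vv{s_j}$ with support disjoint from $\Span{\lm(S)}$. Both $\vv{r}$ and $\vv{0}$ are then admissible remainders of $\vv{f}$, since $\vv{f}\in L$. Uniqueness in the second condition forces $\vv{r}=\vv{0}$, so $\vv{f}=\sum_jc_j\vv{s_j}$ with $\lm(\vv{f})\monOrdLeq\lm(c_j\vv{s_j})$ for all $j$ with $c_j\neq0$.

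The step I expect to be the main obstacle is the end of this last argument. The monomial $\lm(\vv{f})$ appears in $\vv{f}$, hence in some term $c_j\vv{s_j}$. All of that term's monomials are $\succeq\lm(c_j\vv{s_j})\succeq\lm(\vv{f})$, so $\lm(\vv{f})$ must equal $\lm(c_j\vv{s_j})\in\Span{\lm(S)}$, giving the contradiction. The subtle point is justifying that a monomial in the support of an infinite-type sum $\sum_jc_j\vv{s_j}$ must lie in the support of some summand. Since there are only finitely many summands, this is simply the finite-sum support inclusion, and it works.
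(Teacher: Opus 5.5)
Your argument is correct and uses the same mechanism as the paper's proof. You apply Proposition~\ref{prop:nf}, use either the standard-basis property or uniqueness of remainders to force the remainder to be $\vv{0}$, and then combine the inequality of Proposition~\ref{prop:nf} with the finite-sum support inclusion to get equality of leading monomials. The differences are that you run the cycle as $1\Rightarrow3\Rightarrow2\Rightarrow1$ instead of the paper's $1\Rightarrow2\Rightarrow3\Rightarrow1$, and you make explicit the identity $\lm(c\vv{s})=\lm(c)\lm(\vv{s})$, which the paper uses without comment in its ``obvious'' step $3\Rightarrow1$.
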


\begin{proof}
  As above, we let $S=\left\{\vv{s_1},\cdots,\vv{s_q}\right\}$. Moreover, the
  implication $3.\Rightarrow 1.$ being obvious, we only have to prove the
  following two implications. 

  \begin{description}
  \item[$1.\Rightarrow2.$] Fix $\vv{f}\in\Lambda^p$. With the notations of
    Proposition~\ref{prop:nf}, $\vv{r}=\vv{f}-\left(c_1\vv{s_1}+\cdots
    c_q\vv{s_q}\right)$ satisfies the required two conditions. Moreover, if 
    $\vv{f}\in L$, then we have $\vv{r}\in L$; but since
    $\supp\left(\vv{r}\right)\cap\Span{\lm(S)} = \emptyset$ and $S$ 
    is a standard basis of $L$ with respect to $\monOrd$, we get
    $\vv{r}=\vv{0}$. Hence, $L$ is generated by $S$. Finally, if we assume that
    $\vv{r}'$ also satisfies the two conditions of $2.$, then we have
    $\vv{r}-\vv{r}'\in L$ and
    $\supp\left(\vv{r}-\vv{r}'\right)\cap\Span{\lm(S)}=\emptyset$; so using
    again that $S$ is a standard basis of  $L$ with respect to $\monOrd$, we
    conclude that $\vv{r}-\vv{r}'=\vv{0}$.  
  \item[$2.\Rightarrow 3.$] Let $\vv{f}\in L$. With the notations of
    Proposition~\ref{prop:nf}, the element
    $\vv{r}=\vv{f}-\left(c_1\vv{s_1}+\cdots c_q\vv{s_q}\right)$ satisfies the
    condition $2$, and since $L$ is generated by $S$, $\vv{0}$ also satisfies
    this condition. The uniqueness property thus implies
    $\vv{f}=c_1\vv{s_1}+\cdots+c_q\vv{s_q}$. Now, from
    Proposition~\ref{prop:nf}, $\lm\left(\vv{f}\right)$ is not greater than the
    smallest element among $\lm\left(c_j\vv{s_j}\right)$'s, with $1\leq j\leq q$
    such that $c_j$ is non-zero, but it cannot be strictly smaller since
    otherwise, the equation $\vv{f}=c_1\vv{s_1}+\cdots+c_q\vv{s_q}$ would not
    hold. Thus, $\left(c_1,\cdots,c_q\right)$ is a standard representation of
    $\vv{f}$ with respect to $S$ and $\monOrd$. 
  \end{description}
\end{proof}

\section{Standard bases of syzygies and free resolution}
\label{sec:std_basis_syz}

In this section, we construct a standard basis of the module of syzygies of a
standard basis. From this, we deduce how to construct in principle a free
resolution of a finitely presented module.   
\smallskip

The setting is the same as in Section~\ref{sec:diamond_lemma}: $p$ and $q$ are
two positive integers, $S=\left\{\gen\right\}$ is a set of non-zero
elements of $\Lambda^p$, and $\monOrd$ is a monomial order over
$\mon\left(\Lambda^p\right)$. However, since the two free modules $\Lambda^p$ and
$\Lambda^q$ are involved, we keep the notations $\vve{p,i}$ and $\vve{q,j}$,
with $1\leq i\leq p$ and $1\leq j\leq q$, respectively, for the elements of the
corresponding canonical bases. Moreover, we will introduce a monomial order
$\SchOrd$ over $\mon\left(\Lambda^q\right)$. For clarity, we will not indicate
the dependency on the order in the $\lm$ notation. In case of ambiguity, we will
specify whether $\lm$ is applied to an element of $\Lambda^p$ or $\Lambda^q$ in
order to resolve the ambiguity. The notation $\lc$ will always apply to elements
of $\Lambda^p$, hence with respect to $\monOrd$ and never to $\SchOrd$. 
\smallskip

The {\em module of syzygies} of $S$ is defined as the following sub-module of
$\Lambda^q$:    
\[\syz^\Lambda(S)=
\left\{\left(c_1,\cdots,c_q\right)\in\Lambda^q\mid
c_1\vv{s_1}+\cdots+c_q\vv{s_q}=\vv{0}\right\}.\]
An {\em overlapping} of $S$ with respect to $\monOrd$ is a pair of distinct
elements of $S$ whose leading monomials have a common multiple. Formally, it is
a pair $\left(\vv{s_j},\vv{s_{j'}}\right)$, with $1\leq j\neq j'\leq q$, such
that 
\begin{equation}\label{equ:lm_overlapp}
  \lm\left(\vv{s_j}\right)=x^{\mu_j}\vve{p,i}
  \andqquad
  \lm\left(\vv{s_{j'}}\right)=x^{\mu_{j'}}\vve{p,i},
\end{equation}
where $\mu_j,\mu_{j'}\in\monX$ and $1\leq i\leq p$. Moreover, for
$\mu,\mu'\in\NN^d$, we denote the multi-exponent of the least common multiple of
$x^\mu$ and $x^{\mu'}$ in $\monX$ as follows:
\[\lcm\left(\mu,\mu'\right)=
\left(\max\left(\mu_1,\mu'_1\right),\cdots,\max\left(\mu_d,\mu'_d\right)\right)
\in\NN^d.\] 

Now, following~\cite{GG02}, we define the {\em Schreyer order} over 
$\mon\left(\Lambda^q\right)$ induced by $(S,\monOrd)$ as being the order
$\SchOrd$ over $\mon\left(\Lambda^q\right)$ defined by $x^\mu\vve{q,j}\SchOrd   
x^{\mu'}\vve{q,j'}$ if the following holds:   
\[\lm\left(x^\mu\vv{s_j}\right)\monOrd\lm\left(x^{\mu'}\vv{s_{j'}}\right)
\qquad\text{or}\qquad
\left(\lm\left(x^\mu\vv{s_j}\right)=\lm\left(x^{\mu'}\vv{s_{j'}}\right)
\andquad
j<j'\right).\]
Notice that $\SchOrd$ is a monomial over $\mon\left(\Lambda^q\right)$ that is 
compatible with the same monomial order over $\monX$ as the one with which
$\monOrd$ is compatible. 
\smallskip

In the rest of the section, when we write "$S$ is a standard basis with respect 
to $\monOrd$", the underlying module is the sub-module of $\Lambda^p$ generated
by $S$. This is legitimate according to Theorem~\ref{thm:diamond_lemma}.

\begin{lemma}\label{lem:syz}
  Assume that $S$ is a standard basis with respect to $\monOrd$, let
  $\left(\vv{s_j},\vv{s_{j'}}\right)$ be an overlapping of $S$ with respect to
  $\monOrd$, with $1\leq j<j'\leq q$, and let $\mu_j,\mu_{j'}\in\NN^d$ be
  defined by~\eqref{equ:lm_overlapp}. Then, there exists 
  $\left(c_1,\cdots,c_q\right)\in\Lambda^q$ such that the following element
  \[\syz\left(\vv{s_j},\vv{s_{j'}}\right)=
  \frac1{\lc\left(\vv{s_j}\right)}
  x^{\lcm\left(\mu_j,\mu_{j'}\right)-\mu_j}\vve{q,j}-
  \frac1{\lc\left(\vv{s_{j'}}\right)}
  x^{\lcm\left(\mu_j,\mu_{j'}\right)-\mu_{j'}}\vve{q,j'}-
  \sum_{k=1}^qc_k\vve{q,k},\]
  is a non-zero element of $\syz^\Lambda(S)$ and has leading monomial
  $x^{\lcm\left(\mu_j,\mu_{j'}\right)-\mu_j}\vve{q,j}$ with respect to
  $\SchOrd$.   
\end{lemma}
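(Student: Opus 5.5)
Write $m=\lcm\left(\mu_j,\mu_{j'}\right)$. The plan is to form the $S$-series
\[\vv{g}=\frac1{\lc\left(\vv{s_j}\right)}x^{m-\mu_j}\vv{s_j}-
\frac1{\lc\left(\vv{s_{j'}}\right)}x^{m-\mu_{j'}}\vv{s_{j'}}\in L,\]
where $L$ is the sub-module generated by $S$. Both terms have leading monomial $x^m\vve{p,i}$ with leading coefficient $1$, so this monomial cancels. Hence either $\vv{g}=\vv{0}$, or $x^m\vve{p,i}\monOrd\lm\left(\vv{g}\right)$.

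\emph{Choice of the $c_k$.} If $\vv{g}=\vv{0}$, we take all $c_k=0$. Otherwise, Theorem~\ref{thm:diamond_lemma} (implication $1.\Rightarrow 3.$) gives a standard representation $\left(c_1,\cdots,c_q\right)$ of $\vv{g}$. In either case $\sum_k c_k\vv{s_k}=\vv{g}$. It follows at once that the image of $\syz\left(\vv{s_j},\vv{s_{j'}}\right)$ under $\vve{q,k}\mapsto\vv{s_k}$ is $\vv{g}-\vv{g}=\vv{0}$, so the element lies in $\syz^\Lambda(S)$.

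\emph{Leading monomial.} The key step is to compare monomials under $\SchOrd$. By definition of the Schreyer order, a monomial $x^\nu\vve{q,k}$ is compared through $\lm\left(x^\nu\vv{s_k}\right)=x^\nu\lm\left(\vv{s_k}\right)$, with ties broken by the index $k$. We treat the three kinds of terms separately.
\begin{enumerate}
\item For $x^{m-\mu_j}\vve{q,j}$ and $x^{m-\mu_{j'}}\vve{q,j'}$, both images equal $x^m\vve{p,i}$. Since $j<j'$, the first is strictly smaller than the second.
\item For each $k$ with $c_k\neq0$, every monomial $x^\nu\vve{q,k}$ in $\supp(c_k)\vve{q,k}$ satisfies $x^\nu\lm\left(\vv{s_k}\right)\monOrdLeq$-greater-or-equal to $\lm\left(c_k\vv{s_k}\right)$. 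Indeed, $\lm\left(c_k\vv{s_k}\right)=\lm(c_k)\lm\left(\vv{s_k}\right)$ and $x^\nu$ is not below $\lm(c_k)$; compatibility of $\monOrd$ with the action of $\monX$ gives the inequality.
\item By the standard representation property, $\lm\left(c_k\vv{s_k}\right)\monOrdLeq\lm\left(\vv{g}\right)$ fails only in the sense $\lm\left(\vv{g}\right)\monOrdLeq\lm\left(c_k\vv{s_k}\right)$. Combined with $x^m\vve{p,i}\monOrd\lm\left(\vv{g}\right)$, this yields $x^m\vve{p,i}\monOrd x^\nu\lm\left(\vv{s_k}\right)$ strictly. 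Therefore every monomial coming from $\sum_k c_k\vve{q,k}$ is strictly $\SchOrd$-greater than $x^{m-\mu_j}\vve{q,j}$.
\end{enumerate}

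\emph{Conclusion.} The monomial $x^{m-\mu_j}\vve{q,j}$ appears with coefficient $1/\lc\left(\vv{s_j}\right)\neq0$. It is not cancelled by the $c_k$-part, since all monomials there are strictly larger, and not by the $\vve{q,j'}$ term, which is a different monomial. Hence it is the minimum of the support, i.e.\ the leading monomial with respect to $\SchOrd$, and in particular the element is non-zero.

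The main obstacle is the strictness in step 3, which is what rules out ties with possible terms $c_j\vve{q,j}$. It rests on the strict inequality $x^m\vve{p,i}\monOrd\lm(\vv{g})$ together with the ``no cancellation'' property of standard representations. The care needed is in passing from $\lm\left(c_k\vv{s_k}\right)$ to each individual monomial of $c_k$, using that multiplication by $\monX$ preserves $\monOrd$.
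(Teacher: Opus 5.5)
Your proof is correct and follows the paper's argument. You form the $S$-series, take a standard representation via Theorem~\ref{thm:diamond_lemma}, use the cancellation of $x^{m}\vve{p,i}$ to get strict inequalities for every $\lm(c_k\vv{s_k})$, and conclude with the tie-break $j<j'$ in $\SchOrd$. You are slightly more careful than the paper on two points: you treat $\vv{g}=\vv{0}$ separately (standard representations are defined only for non-zero elements), and you spell out why every monomial of $c_k\vve{q,k}$, not just $\lm(c_k\vv{s_k})$, lies strictly above $x^{m-\mu_j}\vve{q,j}$.
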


\begin{proof}
 Let us consider the element
  \[\vv{f}=\frac1{\lc\left(\vv{s_j}\right)}
  x^{\lcm\left(\mu_j,\mu_{j'}\right)-\mu_j}\vv{s_j}-
  \frac1{\lc\left(\vv{s_{j'}}\right)}
  x^{\lcm\left(\mu_j,\mu_{j'}\right)-\mu_{j'}}\vv{s_{j'}}.\]
  According to Theorem~\ref{thm:diamond_lemma}, since $\vv{f}$ belongs to the
  sub-module of $\Lambda^p$ generated by $S$, $\vv{f}$ admits a standard
  representation  $\left(c_1,\cdots,c_q\right)\in\Lambda^q$ with respect to $S$
  and $\monOrd$, \ie, we have  
  \[\vv{f}=c_1\vv{s_1}+\cdots+c_q\vv{s_q}\andqquad
  \lm\left(\vv{f}\right)=\min\left\{
  \lm\left(c_k\vv{s_k}\right)\mid 1\leq k\leq q, c_k\neq 0\right\}.\]
  The definition and the above decomposition of $\vv{f}$ yield
  \[\frac1{\lc\left(\vv{s_j}\right)}
  x^{\lcm\left(\mu_j,\mu_{j'}\right)-\mu_j}\vv{s_j}-
  \frac1{\lc\left(\vv{s_{j'}}\right)}
  x^{\lcm\left(\mu_j,\mu_{j'}\right)-\mu_{j'}}\vv{s_{j'}}-
  \sum_{k=1}^qc_k\vv{s_k}=
  \vv{0},\]
  meaning that $\syz\left(\vv{s_j},\vv{s_{j'}}\right)$ belongs to
  $\syz^\Lambda(S)$. Finally, notice that by definition of $\vv{f}$, the
  monomial
  \begin{equation}\label{equ:lem_syz}
    x^{\lcm\left(\mu_j,\mu_{j'}\right)-\mu_j}\lm\left(\vv{s_j}\right)=
    x^{\lcm\left(\mu_j,\mu_{j'}\right)-\mu_{j'}}\lm\left(\vv{s_{j'}}\right),
  \end{equation}
  cancels in $\vv{f}$, so that, if it is well-defined, $\lm\left(\vv{f}\right)$
  is strictly greater than~\eqref{equ:lem_syz} with respect to $\monOrd$. Hence, 
  $\lm\left(c_k\vv{s_k}\right)$ is also strictly greater
  than~\eqref{equ:lem_syz}, for every $1\leq k\leq q$ such that $c_k\neq
  0$. Since $j<j'$, we get that  $\syz\left(\vv{s_j},\vv{s_{j'}}\right)$ is
  non-zero and has leading monomial
  $x^{\lcm\left(\mu_j,\mu_{j'}\right)-\mu_j}\vve{q,j}$ with respect to
  $\SchOrd$, which ends the proof.  
\end{proof}

\begin{definition}
  If $S$ is a standard basis with respect to $\monOrd$, a {\em Schreyer family}
  of $S$ with respect to $\monOrd$ is a set containing one element of the form
  $\syz\left(\vv{s},\vv{s'}\right)$, as in Lemma~\ref{lem:syz}, for each
  overlapping $\left(\vv{s},\vv{s'}\right)$ of $S$ with respect to $\monOrd$. 
\end{definition}

Note that a Schreyer family is finite since its cardinality is equal to the
number of overlapping. 

\begin{theorem}\label{thm:std_bases_syz}
  Let $L\subseteq\Lambda^p$ be a sub-module, $\monOrd$ be a monomial order over
  $\mon\left(\Lambda^p\right)$, and $\left\{\gen\right\}\subset\Lambda^p$ be a
  standard basis of $L$ with respect to $\monOrd$. Then, any Schreyer family of
  $\left\{\gen\right\}$ with respect to $\monOrd$ is a standard basis of
  $\syz^\Lambda\left(\gen\right)$ with respect to the Schreyer order induced by
  $\left(\left\{\gen\right\},\monOrd\right)$.  
\end{theorem}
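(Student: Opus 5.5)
We must show that every leading monomial, with respect to $\SchOrd$, of a non-zero syzygy is divisible by the leading monomial of some element of the Schreyer family. By Lemma~\ref{lem:syz}, the family lies in $\syz^\Lambda\left(\gen\right)$ and its elements have leading monomials $x^{\lcm(\mu_j,\mu_{j'})-\mu_j}\vve{q,j}$ for $j<j'$. So it suffices to prove the following. Let $\vv{c}=(c_1,\cdots,c_q)$ be a non-zero syzygy with $\lm(\vv{c})=x^\nu\vve{q,j}$. Then there exists $j'>j$ such that $(\vv{s_j},\vv{s_{j'}})$ is an overlapping and $x^{\lcm(\mu_j,\mu_{j'})-\mu_j}$ divides $x^\nu$.

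To locate $j'$, set $m=\lm\left(x^\nu\vv{s_j}\right)=x^{\nu+\mu_j}\vve{p,i}\in\mon(\Lambda^p)$. By definition of $\SchOrd$, every monomial $x^{\mu}\vve{q,k}\in\supp(\vv{c})$ satisfies $x^\nu\vve{q,j}\SchOrd x^\mu\vve{q,k}$ or equality. This means $m\monOrdLeq\lm(x^\mu\vv{s_k})$, and when these two are equal we have $k>j$.

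First consider those $k$ whose terms contribute to the coefficient of $m$ in $\sum_k c_k\vv{s_k}$. Since $\lm(x^\mu\vv{s_k})\succeq m$, the monomial $x^\mu\vv{s_k}$ can only contribute to $m$ when $\lm(x^\mu\vv{s_k})=m$. The coefficient of $m$ in $\sum_k c_k\vv{s_k}$ therefore equals
\[\sum_{(k,\mu)}\Span{\vv{c}\mid x^\mu\vve{q,k}}\lc(\vv{s_k}),\]
summed over the pairs with $x^\mu\lm(\vv{s_k})=m$. This is a finite sum, since each $k$ contributes at most one $\mu$. Because $\vv{c}$ is a syzygy, this coefficient is zero.

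The term $(j,\nu)$ contributes the non-zero value $\lc(\vv{c})\lc(\vv{s_j})$. Hence some other pair $(k,\mu)$ with $k\neq j$ contributes, and by the remark above it satisfies $k>j$ and $x^\mu\lm(\vv{s_k})=m$. In particular $\lm(\vv{s_k})$ lies on the same component $\vve{p,i}$, so $(\vv{s_j},\vv{s_k})$ is an overlapping with $j<k$. Moreover $x^{\mu_j}$ and $x^{\mu_k}$ both divide $x^{\nu+\mu_j}$, hence so does $x^{\lcm(\mu_j,\mu_k)}$. Thus $x^{\lcm(\mu_j,\mu_k)-\mu_j}$ divides $x^\nu$.

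Finally, the Schreyer family contains an element $\syz(\vv{s_j},\vv{s_k})$ for this overlapping, since the overlapping is an unordered pair and the lemma applies with the smaller index first. Its leading monomial $x^{\lcm(\mu_j,\mu_k)-\mu_j}\vve{q,j}$ divides $\lm(\vv{c})$. This proves $\Span{\lm(\text{family})}\supseteq\lm(\syz^\Lambda(S))$. The reverse inclusion holds because the family lies in $\syz^\Lambda(S)$ and $\lm$ of a sub-module is stable under the action of $\monX$.

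The main obstacle is the cancellation argument in the second and third paragraphs. There one must carefully justify that only terms with $\lm(x^\mu\vv{s_k})=m$ can contribute to $m$, which uses that $m$ is minimal among these leading monomials. One must also use the tie-breaking rule in $\SchOrd$ to force $k>j$. The remaining steps are routine.
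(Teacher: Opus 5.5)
Your proof is correct, but it takes a different route from the paper. The paper uses the implication $3.\Rightarrow 1.$ of Theorem~\ref{thm:diamond_lemma} to reduce to showing that every syzygy $\vv{c}$ has a standard representation with respect to $\Scal$. It applies Proposition~\ref{prop:nf} in $\Lambda^q$ (the transfinite division) to write $\vv{c}=\sum d_{j,j'}\syz(\vv{s_j},\vv{s_{j'}})+\vv{r}$ with $\supp(\vv{r})\cap\Span{\lm(\Scal)}=\emptyset$. It then shows $\vv{r}=\vv{0}$ by contradiction. Since $\vv{r}$ is itself a syzygy, choosing $j$ with $\lm(r_j\vv{s_j})$ minimal forces a second index $j'$ with the same leading monomial, and then one of $x^{\nu_j}\vve{q,j}$, $x^{\nu_{j'}}\vve{q,j'}$, both in $\supp(\vv{r})$, lies in $\Span{\lm(\Scal)}$.

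You instead check the definition of a standard basis directly. You identify $\lm_{\SchOrd}(\vv{c})=x^\nu\vve{q,j}$ and use the tie-breaking rule of $\SchOrd$ to force the cancelling partner to satisfy $k>j$. You then exhibit the specific element $\syz(\vv{s_j},\vv{s_k})$ whose leading monomial divides $\lm(\vv{c})$. The cancellation step (minimal leading monomial plus a vanishing coefficient) is the same idea as the paper's, but applied to $\vv{c}$ itself rather than to a remainder.

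Your route is more economical: it needs only Lemma~\ref{lem:syz} and finite coefficient extraction in products of power series, bypassing Proposition~\ref{prop:nf} and the diamond lemma. It also tells you exactly which Schreyer syzygy accounts for $\lm(\vv{c})$. The paper's route in exchange produces an explicit standard representation $(d_{j,j'})$ of each syzygy. In its contradiction step it also never needs to know which of the two indices is smaller; the tie-break enters only inside Lemma~\ref{lem:syz}.

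Two harmless imprecisions. First, ``when these two are equal we have $k>j$'' should say ``unless $x^\mu\vve{q,k}=x^\nu\vve{q,j}$''. Second, $\lc(\vv{c})$ should be written $\Span{\vv{c}\mid x^\nu\vve{q,j}}$, since the paper reserves $\lc$ for elements of $\Lambda^p$.
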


\begin{proof}
  As above, we write $S=\left\{\gen\right\}$ and denote by $\SchOrd$ the
  Schreyer order induced by $(S,\monOrd)$. Let us define  
  \[I=\left\{(j,j')\in\{1,\cdots,q\}^2\mid\left(s_j,s_{j'}\right)\
  \text{is an overlapping of}\ S\ \text{with respect
    to}\ \monOrd\ \text{and}\ j<j'\right\},\] 
  so that any Schreyer family of $S$ with respect to $\monOrd$ is indexed by
  $I$. We then fix a Schreyer family $\Scal$, written as follows: 
  \[\Scal=\left\{\syz\left(\vv{s_j},\vv{s_{j'}}\right)\mid (j,j')\in I\right\}.
  \]
  According to Lemma~\ref{lem:syz}, we have
  $\Scal\subset\syz^\Lambda(\gen)$. Therefore, Theorem~\ref{thm:diamond_lemma}
  implies that it suffices to show that any syzygy of $S$ admits a standard
  representation with respect to $\Scal$ and $\SchOrd$. Let us fix such a syzygy 
  \[\vv{c}=\left(c_1,\cdots, c_q\right)\in\syz^\Lambda(S).\]
  By applying Proposition~\ref{prop:nf} to $\vv{c}\in\Lambda^q$, $\SchOrd$, and 
  $\Scal$ instead of $\vv{f}$, $\monOrd$, and $S$, respectively, for every
  $(j,j')\in I$, there exists $d_{j,j'}\in\Lambda$ such that writing
  \[\vv{r}=
  \vv{c}-\sum_{(j,j')\in I}d_{j,j'}\syz\left(\vv{s_j},\vv{s_{j'}}\right),\]
  the following holds, where $\lm$ and $\min$ refer to elements of
  $\mon\left(\Lambda^q\right)$: 
  \begin{equation}\label{equ:thm_syz}
    \supp\left(\vv{r}\right)\cap\Span{\lm\left(\Scal\right)}=\emptyset
    \andqquad
    \lm\left(\vv{c}\right)=\min\left(
    \lm\left(d_{j,j'}\syz\left(\vv{s_j},\vv{s_{j'}}\right)\right)\mid
    d_{j,j'}\neq 0\right).
  \end{equation}
  It remains to be proven that $\vv{r}=\vv{0}$ because if it is the case, the
  second condition of~\eqref{equ:thm_syz} implies that
  $\left(d_{j,j'}\right)_{(j,j')\in I}$ is a standard representation
  of $\vv{c}$ with respect to $\Scal$ and $\SchOrd$. Suppose that
  $\vv{r}\neq\vv{0}$ and let $1\leq j\leq q$ be such that the
  $j$th component $r_j$ of $\vv{r}$ is non-zero and
  $\lm\left(r_j\vv{s_j}\right)\in\mon\left(\Lambda^p\right)$ is minimal with
  respect to $\monOrd$ among all monomials of the form
  $\lm\left(r_{j'}\vv{s_{j'}}\right)$ with $r_{j'}\neq 0$. Since $\vv{r}$ is a
  $\Lambda$-linear combination of elements of $\syz^\Lambda(S)$, it belongs to
  this module, \ie, we have 
  \[\sum_{j'=1}^qr_{j'}\vv{s_{j'}}=\vv{0}.\]
  Hence, there exists an index $1\leq j'\leq q$ different from $j$ such that
  $\lm\left(r_j\vv{s_j}\right)=\lm\left(r_{j'}\vv{s_{j'}}\right)$. Let
  $\mu_j,\mu_{j'},\nu_j,\nu_{j'}\in\NN^d$ and $1\leq i\leq p$ such that
  \[\lm\left(\vv{s_j}\right)=x^{\mu_j}\vve{p,i},\qquad
  \lm\left(\vv{s_{j'}}\right)=x^{\mu_{j'}}\vve{p,i},\qquad
  \lm\left(r_j\right)=x^{\nu_j},\andqquad
  \lm\left(r_{j'}\right)=x^{\nu_{j'}},\]
  so that $x^{\nu_j}\vve{q,j}$ and $x^{\nu_{j'}}\vve{q,j'}$ both belong to
  $\supp\left(\vv{r}\right)$. Notice that
  $\lm\left(r_j\vv{s_j}\right)=
  \lm\left(r_{j'}\vv{s_{j'}}\right)\in\mon\left(\Lambda^p\right)$ implies that
  the index $1\leq i\leq p$ is indeed the same for $\lm\left(\vv{s_j}\right)$
  and $\lm\left(\vv{s_{j'}}\right)$, and also that the equality 
  $\mu_j+\nu_j=\mu_{j'}+\nu_{j'}$ holds. By definition of the least common
  multiple, we get that $x^{\lcm\left(\mu_j,\mu_{j'}\right)-\mu_{j'}}$ and
  $x^{\lcm\left(\mu_j,\mu_{j'}\right)-\mu_j}$ divide $x^{\nu_{j'}}$ and
  $x^{\nu_j}$, respectively. From Lemma~\ref{lem:syz}, we get that
  $x^{\nu_j}\vve{q,j}$ or $x^{\nu_{j'}}\vve{q,j'}$ is a multiple of
  $\lm\left(\syz\left(\vv{s_j},\vv{s_{j'}}\right)\right)$ under the action of
  $\monX$ over the set of monomials $\mon\left(\Lambda^q\right)$. Since both
  $x^{\nu_j}\vve{q,j}$ and $x^{\nu_{j'}}\vve{q,j'}$ belong to
  $\supp\left(\vv{r}\right)$, this leads to a contradiction with the first
  condition of~\eqref{equ:thm_syz}, so that $\vv{r}=\vv{0}$, as desired. This
  completes the proof. 
\end{proof}

Finally, one may apply Theorems~\ref{thm:diamond_lemma}
and~\ref{thm:std_bases_syz} to the construction of a free resolution of a
finitely presented module for which the defining relations form a standard
basis. Let $\M=\Lambda^p/\Lambda(S)$, where $\Lambda(S)\subseteq\Lambda^p$
denotes the sub-module generated by $S=\left\{\gen\right\}\subset\Lambda^p$, be
the module with $p$ generators subject to the $q$ relations defined by $S$. Let
$\monOrd$ be a monomial order over $\mon\left(\Lambda^p\right)$ for which $S$ is
a standard basis of $\Lambda(S)$. Then, the free resolution, together with a
family of monomial orders, are constructed by induction as follows. 
\smallskip

For the initial step, we set $q_0=p$, $q_1=q$, $\monOrd_0=\monOrd$, and denote
by 
\[\partial_0:\Lambda^{q_0}\to\M \andqquad
\partial_1:\Lambda^{q_1}\to\Lambda^{q_0},\]
the canonical projection and the homomorphism defined by
$\partial_1\left(\vve{q_1,j}\right)=\vv{s_j}$, for $1\leq j\leq q_1$, 
respectively. We thus have the exact sequence
\[\begin{tikzcd}
\Lambda^{q_1}\ar[r, "\partial_1"] &
\Lambda^{q_0}\ar[r, "\partial_0"] &
\M\ar[r] & 0.
\end{tikzcd}
\]

By induction, assume that for a given integer $n\geq 1$, we have already
constructed an exact sequence  
\begin{equation}\label{equ:partial_res}
  \begin{tikzcd}
    \Lambda^{q_n}\ar[r, "\partial_n"] &
    \Lambda^{q_{n-1}}\ar[r, "\partial_{n-1}"] &
    \cdots\ar[r, "\partial_1"] &
    \Lambda^{q_0}\ar[r, "\partial_0"] &
    \M\ar[r] & 0,
  \end{tikzcd}
\end{equation}
and a monomial order $\monOrd_{n-1}$ over $\mon\left(\Lambda^{q_{n-1}}\right)$
such that the set 
\[\Scal_{n-1}=\left\{\partial_n\left(\vve{q_n,j}\right)\mid 1\leq j\leq
q_n\right\}\subset\Lambda^{q_{n-1}},\]
is a standard basis of the sub-module
$\ker\left(\partial_{n-1}\right)\subseteq\Lambda^{q_{n-1}}$ with respect to
$\monOrd_{n-1}$.
\smallskip

We then proceed by defining the following objects: let
\begin{enumerate}
\item $\Scal_n\subset\Lambda^{q_n}$ be a Schreyer family of $\Scal_{n-1}$ with
  respect to $\monOrd_{n-1}$,
\item $\monOrd_n$ be the Schreyer order over $\mon\left(\Lambda^{q_n}\right)$
  induced by $\left(\Scal_{n-1},\monOrd_{n-1}\right)$,
\item $q_{n+1}\in\NN$ be the cardinality of $\Scal_n$, 
\item $\partial_{q_{n+1}}:\Lambda^{q_{n+1}}\to\Lambda^{q_n}$ be a homomorphism
  such that $\Scal_n=\left\{\partial_{n+1}\left(\vve{q_{n+1},j}\right)\mid
  1\leq j\leq q_{n+1}\right\}$.
\end{enumerate}
From Theorem~\ref{thm:std_bases_syz}, $\Scal_n$ is a standard basis of 
$\ker\left(\partial_n\right)$ with respect to $\monOrd_n$; consequently,
according to Theorem~\ref{thm:diamond_lemma}, it is a generating set for 
$\ker\left(\partial_n\right)$. Hence, the homomorphism $\partial_{n+1}$ defined
above extends the exact sequence~\eqref{equ:partial_res} to degree $n+1$,
namely, we have the following exact sequence: 
\[
  \begin{tikzcd}
    \Lambda^{q_{n+1}}\ar[r, "\partial_{n+1}"] &
    \Lambda^{q_{n}}\ar[r, "\partial_{n}"] &
    \cdots\ar[r, "\partial_1"] &
    \Lambda^{q_0}\ar[r, "\partial_0"] &
    \M\ar[r] & 0.
  \end{tikzcd}
\]
This concludes the induction step.
\bibliography{references}
\end{document}